\newtheorem{theorem}{Theorem}
\newtheorem{open}{Open Problem}
\begin{document}

\title{Equilateral n-gons in planar integer lattices}
\author{Ghaura Mahabaduge\thanks{Department of Mathematics, Massachusetts Institute of Technology, Cambridge, MA 02139. Email: ghaura\_m@mit.edu.}}
\maketitle

\begin{abstract}
We study the existence of equilateral polygons in planar integer lattices. Maehara showed that it's sufficient to work with rectangular lattices $\Lambda(m) = L[(1,0),(0,\sqrt{m})]$ with $m \equiv 3 \pmod{4}$. Building on results of Maehara and of Iino and Sakiyama, we show that for every such $m$ there exists $N$ such that for all $n \geq N$, the lattice $\Lambda(m)$ contains an equilateral $n$-gon. This extends previous classifications of equilateral polygons in planar lattices.
\end{abstract}

\section{Introduction}

The study of equilateral polygons within lattices combines elements of geometry, number theory, and discrete mathematics. In particular, the problem of characterizing which polygons can be embedded in given planar lattices has drawn sustained interest. A central focus has been on determining for which values of $n$ there exists a convex equilateral $n$-gon whose vertices lie in a specified lattice.

Recent results by Maehara \cite{maehara2019planar} established that every planar integral lattice contains convex equilateral $n$-gons for all even $n \geq 4$, and for certain odd values of $n$, depending on a number-theoretic invariant of the lattice. In particular, Maehara showed that a planar integral lattice $L$ contains some equilateral polygon with an odd number of sides if and only if the square-free part of the square of the determinant of $L$, denoted $\nu(L)$, satisfies $\nu(L) \equiv 3 \pmod{4}$.

Building on this, Iino and Sakiyama \cite{iino2025planarlatticesequilateraloddgons} studied rectangular lattices of the form $\Lambda(m) = L[(1,0),(0,\sqrt{m})]$ for square-free integers $m \equiv 3 \pmod{4}$, and provided both necessary and sufficient conditions for the existence of equilateral polygons with a given odd number of sides. Among their key findings is that for such lattices, if a convex equilateral $n$-gon exists, then $n$ must be at least as large as every prime dividing $m$. They also proved that this is a sufficient condition when the largest prime factor of $m$ is less than 29 by a brute-force computer search.

In this paper, we extend the classification of equilateral polygons in rectangular lattices of the form $\Lambda(m)$, with $m \equiv 3 \pmod{4}$.

\begin{theorem}\label{thm: new theorem}
For any $m \equiv 3 \pmod{4}$ there exists $N$ such that for all $n \geq N$ there exists an equilateral polygon of size n.  
\end{theorem}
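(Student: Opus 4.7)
The plan is to separate the problem into the even and odd cases. For even $n \geq 4$, Maehara's theorem already furnishes an equilateral $n$-gon in every planar integral lattice, so the entire new content of the theorem lies in handling odd $n$; I will build every sufficiently large odd equilateral polygon inductively from a single seed by a local edge-splitting move.

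First, I would invoke Maehara's criterion: since $m$ is square-free and $m \equiv 3 \pmod{4}$, one has $\nu(\Lambda(m)) = m \equiv 3 \pmod 4$, so $\Lambda(m)$ contains some equilateral polygon $P_0$ with an odd number of sides $n_0$. Write its common edge length as $\ell$ and its edge vectors as $v_1, \ldots, v_{n_0}$. A short parity argument, namely that if all $v_i$ were parallel to a single line then $\sum v_i = 0$ would force the $+v$ and $-v$ multiplicities to coincide, making $n_0$ even, shows that the $v_i$ span at least two non-parallel directions. In particular, $\Lambda(m)$ contains at least two non-parallel lattice vectors of length $\ell$.

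Next I would introduce a \emph{rhombus bump}: given any equilateral polygon $Q$ in $\Lambda(m)$ with edge length $\ell$ and any chosen edge $v$ oriented from $A$ to $B$, pick a lattice vector $u$ of length $\ell$ not parallel to $v$, and replace $v$ by the three-edge detour $A \to A + u \to A + u + v \to B$. The new edges $u$, $v$, $-u$ each have length $\ell$, so the polygon remains equilateral but has two more sides. Applied repeatedly to $P_0$, this yields an equilateral polygon of every odd size $n \geq n_0$; combined with Maehara's even construction, this establishes the theorem with $N = n_0$.

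I do not expect any serious obstacle. The only non-trivial point is the existence of the auxiliary vector $u$ at each bump, which is handled once and for all by the parity argument above. The theorem asks only for equilaterality, not convexity or simplicity, so I do not have to control the global geometric structure of the resulting polygons. If one later wanted an effective value of $N$, the remaining work would be to bound $n_0$ in terms of $m$, which would require a quantitative refinement of Maehara's existence result for odd polygons and is orthogonal to the combinatorial step carried out here.
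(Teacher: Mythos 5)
Your proof is logically sound, but it reaches the theorem by a genuinely different and much softer route than the paper. You treat the seed odd polygon as a black box: you invoke Maehara's characterization (some odd equilateral polygon exists in $L$ iff $\nu(L)\equiv 3\pmod 4$) to obtain an equilateral $n_0$-gon for some unspecified odd $n_0$, and then climb by two sides at a time; your ``rhombus bump'' is in fact just a re-derivation of Maehara's $(n+2)$-extension lemma, which the paper simply cites. The paper instead constructs the seed odd polygon from scratch: it reduces to finding equal-norm vectors summing to zero, takes edge vectors $(c,0)$, $(a_1,\pm b_1\sqrt{m})$, $(a_2,\pm b_2\sqrt{m})$ subject to the norm equations $c^2-a_1^2=mb_1^2$ and $c^2-a_2^2=mb_2^2$, exhibits explicit polynomial-in-$m$ solutions, and solves $-c=2(a_1t_1+a_2t_2)$ via $\gcd(a_1,a_2)=1$ (and later gives closed forms for $t_1,t_2$). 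The trade-off is clear: your argument is shorter and needs nothing beyond the two cited Maehara results, but it yields no information about $N$ whatsoever, whereas the paper's construction is effective and produces an explicit polygon (hence an explicit $N$ of size polynomial in $m$), which is the real content of the paper beyond what already follows formally from the literature. Two small points worth tightening: your parity argument gives two non-parallel length-$\ell$ lattice vectors, which is enough, but you should route the bump through the vector-multiset criterion (as the paper states it for non-convex polygons) rather than the literal geometric detour, since the detour can create degenerate or self-touching configurations; and you should note explicitly that $m$ is assumed square-free so that $\nu(\Lambda(m))=m$.
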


We will give an existence proof in \Cref{Section: Existence} followed by a explicit construction in \Cref{Section: Construction}.

\section{Background}

An equilateral polygon is a polygon whose sides are all equal in length. If every vertex of a polygon $P$ is in a set $S$, then we say $S$ contains the polygon $P$. For a square-free integer $m > 0$, we denote the rectangular lattice $L[(1,0),(0,\sqrt{m})]$ by $\Lambda(m)$. More generally for linearly independent vectors $\mathbf{a}, \mathbf{b}$ in the plane $\mathbb{R}^2$, $L[\mathbf{a}, \mathbf{b}]$ denotes the planar lattice generated by $\mathbf{a}$ and $\mathbf{b}$ (i.e. 
$L[\mathbf{a}, \mathbf{b}] = \{m\mathbf{a} + n\mathbf{b} \mid m, n \in \mathbb{Z}\} \subset \mathbb{R}^2$). If all the inner products of lattice vectors are integers, we call it a planar integer lattice. A lattice $L$ is said to be \textit{similar} to $L'$ if there exists a $\lambda > 0$ such that $\lambda L = \{\lambda \mathbf{x} \mid \mathbf{x} \in L\}$ is isometric to $L'$. For a planar lattice $L$, let $D(L)$ denote the area of a fundamental parallelogram of the lattice $L$. Thus, if $L = L[\mathbf{a}, \mathbf{b}]$, then $ D(L) = |\det(\mathbf{a}, \mathbf{b})|$. We will denote the square-free part of $D(L)^2$ by $\nu(L)$. 

Maehara \cite{maehara2019planar} proved that, a planar integral lattice $L$ is similar to a sublattice of $\Lambda(\nu)$, and $\Lambda(\nu)$ is similar to a sublattice of $L$, where $\nu$ is the square-free part of $D(L)^2$. Furthermore, it was proven by Maehara \cite{maehara2019planar} that If $\Lambda(m)$ contains an equilateral $n$-gon (resp. a convex equilateral $n$-gon), then it contains an equilateral $(n+2)$-gon (resp. a convex equilateral $(n+2)$-gon). Therefore, once we find some equilateral polygon of size $N$ where $N$ is odd, then we obtain equilateral $n$-gons for every odd $n$, such that $n\geq N$. 

Additionally, Maehara \cite{maehara2019planar} proved that a planar lattice $L$ contains a convex equilateral $n$-gon if and only if $L$ contains $n$ distinct vectors $e_1, e_2, \dots, e_n$ such that $|e_1| = |e_2| = \cdots = |e_n|$ and $e_1 + e_2 + \cdots + e_n = 0$. The proof will lead to the same statement for equilateral $n$-gons (which are not necessarily convex), when we remove the \textbf{distinct vectors} condition.

Therefore to prove \Cref{thm: new theorem}, it is sufficient to find $n$ vectors in $\Lambda(m)$ for any $m \equiv 3 \pmod{4}$, such that the vectors norms are equal to each other and the vector sum is zero.

\section{Existence proof of \Cref{thm: new theorem} }\label{Section: Existence}

\begin{proof}[Proof of Theorem~\ref{thm: new theorem}]
    Note that for even $n>2$ we can construct a parallelogram in the following manner. Let $v_1=(1,\sqrt{m})$ and $v_2=(1,-\sqrt{m})$. 
    A parallelogram can be created by first using  $(\frac{n}{2} -1)$ copies of $v_1$ and then 1 copy of $v_2$ and then $(\frac{n}{2} -1)$ copies of $-v_1$ followed by 1 copy of $-v_2$
    (example in \Cref{fig:parallelogram}).

    Now it's sufficient to prove that there exists some odd $N$ with a equilateral N-gon, because we can always extend to larger odd values by adding 2 edges.

    Let $w_0=(c,0)$,  $w_1=(a_1,b_1\sqrt{m})$, $w_2=(a_1,-b_1\sqrt{m})$, $w_3=(a_2,b_2\sqrt{m})$, $w_4=(a_2,-b_2\sqrt{m})$, where $c,a_1,a_2,b_1,b_2$ are integers. 

    We construct a closed walk using 1 copy of $w_0$, $|t_1|$ copies each of $w_1$ and $w_2$, and $|t_2|$ copies each of $w_3$ and $w_4$ with the signs chosen according to the sign of $t_i$. (Here $t_1$ and $t_2$ are integers)  

    To keep the size of all the vectors being equal, we require : $c^2-a_1^2=mb_1^2$ and $c^2-a_2^2=mb_2^2$.

    Note that by construction the sum vector of these $1+2(|t_1|+|t_2|)$ vectors lie on the x-axis. Therefore we need the condition $-c=2(a_1t_1+a_2t_2)$.

    Let's try to show that a solution exists to this system of equations. 

    Choose $c=\frac{m^2 + 5m + 4}{2}$, $b_1=m+2$, $a_1=\frac{m^2 + 3m + 4}{2}$,$b_2=m+4$, $a_2=\frac{m^2 + 3m - 4}{2}$.  (Since $4|(m+1)$ all of them are integers, furthermore $c$ is even.)

    This satisfies $c^2-a_1^2=mb_1^2$ and $c^2-a_2^2=mb_2^2$.

    If $\gcd(a_1,a_2) = 1$, then there exists $t_1,t_2$ integers such that $-c/2=(a_1t_1+a_2t_2)$ is satisfied.

    $\gcd(a_1,a_2) = \gcd(a_1,a_1 - 4) = \gcd(a_1,4)= \gcd(\frac{m^2 + 3m + 4}{2},4) $. Note that $\frac{m^2 + 3m + 4}{2}$ is odd. Hence $\gcd(a_1,a_2)=1$. This shows existence of a solution for $N=(1+ 2(|t_1|+|t_2|))$.

\end{proof}

\begin{figure}[h!]
    \centering
    \begin{tikzpicture}[scale=0.8,>=stealth]

    \coordinate (O) at (0,0);
    \coordinate (A) at (5,5);          
    \coordinate (B) at ($(A)+(1,-1)$); 
    \coordinate (C) at (1,-1);         

    \draw[->, thick] (O) -- (A) node[midway, left] {$v_1$};
    \draw[->, thick] (A) -- (B) node[midway, above right] {$v_2$};
    \draw[->, thick] (B) -- (C) node[midway, right] {$-v_1$};
    \draw[->, thick] (C) -- (O) node[midway, below left] {$-v_2$};

    \draw[dashed, gray] (O) -- (A) -- (B) -- (C) -- cycle;

    \end{tikzpicture}
    \caption{A parallelogram with vectors labeled $v_1$, $-v_1$, $v_2$, and $-v_2$. The longer sides have a length 5 times the shorter sides. $v_1$ is parallel to the reflection of $v_2$ over the $x$-axis.}
    \label{fig:parallelogram}
\end{figure}
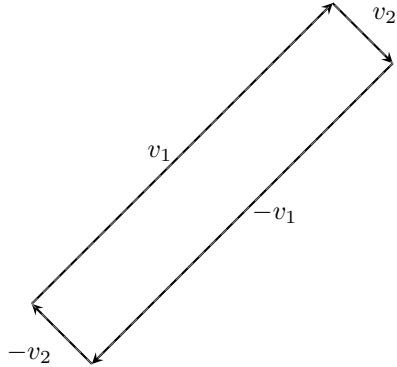

We gave the existence proof mentioned above because it shows how someone might try an approach to solve similar problems related to this problem. Now we give an explicit construction.

\section{Explicit Construction}\label{Section: Construction}

Let $m=4k-1$, $k=4k'+i$, where $i\in\{-1,0,1,2\}$ and let $\chi_i = k' -\frac{i(k-1)}{2}$. 
Then the choice 
\[
t_1= -\chi_i(16k'+4i+3),  \quad t_2 = -t_1 - i
\]
 satisfies $-c=2(a_1t_1+a_2t_2)$

We will now show why this works (Here $a_1,a_2,c$ are the same as in the proof of \Cref{thm: new theorem}).
We have
\begin{align*}
4(a_1t_1 + a_2t_2) &= 4(a_1t_1 + (a_1 -4)(-t_1-i))\\
&= 4 (4t_1 - ia_1 + 4i )\\
&=-4(-4t_1 + ia_1 - 4i )\\
&= -4(4\chi_i(16k'+4i+3 )+ i\frac{m^2 + 3m + 4}{2} - 4i )   \\
&=  -4(4\chi_i(16k'+4i+3 )+ i\frac{m^2 + 3m - 4}{2} ) \\
&= -4 ((4k' - 2i(k-1)) (16k' + 4i + 3 )\\
& \qquad \phantom{} + i (16k^2 -8k + 1 + 12k -3 - 4  )/2    ) \\
&= -4( (4k' - 2i(k-1)) (16k' + 4i + 3 ) + i (8k^2 + 2k - 3  ))  \\
&=-4( (4k' - 2i(k-1)) (4k + 3 ) + i (8k^2 + 2k - 3  )) \\
&=-4( (k - i - 2i(k-1)) (4k + 3 ) + i (8k^2 + 2k - 3  )) \\
&= -4( k(4k + 3 ) -i(2k-1)(4k+3) + i (8k^2 + 2k - 3  )) \\
& = -(4k)(4k+3)  \\
&= -(m+1)(m+4) \\
&= -(m^2 + 5m + 4)\\
&= -2c,
\end{align*}
and therefore $-c= 2(a_1t_1 + a_2t_2)$.

This gives an explicit construction for \Cref{thm: new theorem}.

\vspace{1cm}

\Cref{Illustration} shows an illustration of our construction for $m=7$. We have $c=44$, $a_1=37$, $a_2=33$, $b_1=9$, $b_2=11$, $t_1=11$ and $t_2=-13$. 

Which means we are using $11$ copies of $(37,9\sqrt{7})$, $11$ copies of $(37,-9\sqrt{7})$, $13$ copies of $(-33,11\sqrt{7})$, $13$ copies of $(-33,-11\sqrt{7})$, $1$ copy of $(44,0)$.

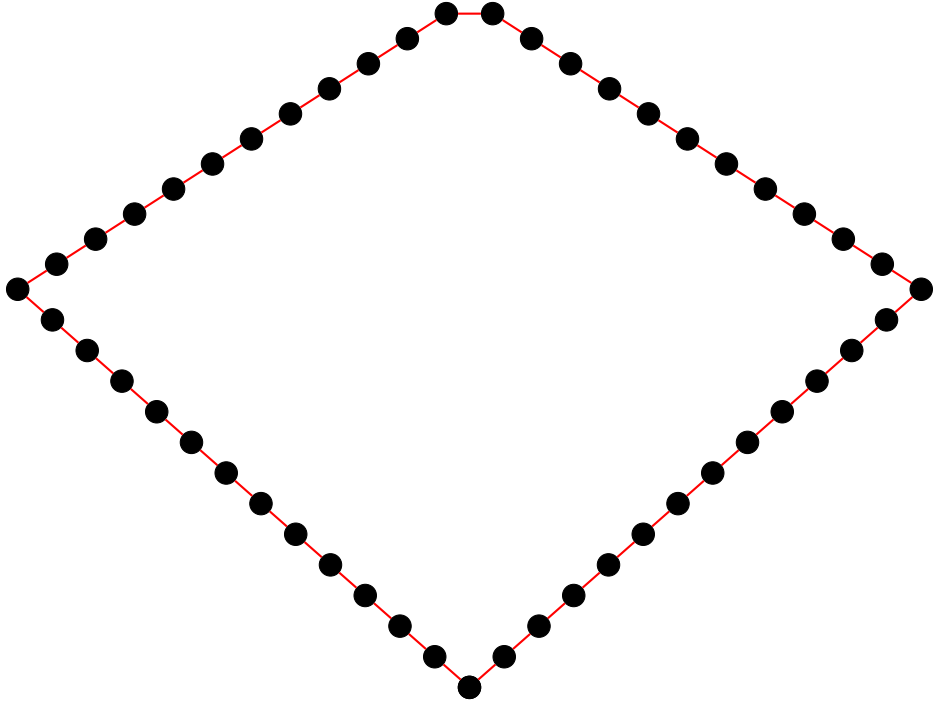
\begin{figure}[h!]
 \centering
  \begin{tikzpicture}[scale=0.14, mystyle/.style={circle, draw, fill=black, inner sep=3pt}]
  \node[mystyle] (A_1) at ({0},{0}) {};
  \node[mystyle] (A_2) at ({3.7},{0.9*sqrt(7)}) {};
  \node[mystyle] (A_3) at ({3.7*2},{0.9*sqrt(7)*2}) {};
  \node[mystyle] (A_4) at ({3.7*3},{0.9*sqrt(7)*3}) {};
  \node[mystyle] (A_5) at ({3.7*4},{0.9*sqrt(7)*4}) {};
  \node[mystyle] (A_6) at ({3.7*5},{0.9*sqrt(7)*5}) {};
  \node[mystyle] (A_7) at ({3.7*6},{0.9*sqrt(7)*6}) {};
  \node[mystyle] (A_8) at ({3.7*7},{0.9*sqrt(7)*7}) {};
  \node[mystyle] (A_9) at ({3.7*8},{0.9*sqrt(7)*8}) {};
  \node[mystyle] (A_10) at ({3.7*9},{0.9*sqrt(7)*9}) {};
  \node[mystyle] (A_11) at ({3.7*10},{0.9*sqrt(7)*10}) {};
  \node[mystyle] (A_12) at ({3.7*11},{0.9*sqrt(7)*11}) {};

  \node[mystyle] (B_1) at ({85.8-0},{0}) {};
  \node[mystyle] (B_2) at ({85.8-3.7},{0.9*sqrt(7)}) {};
  \node[mystyle] (B_3) at ({85.8-3.7*2},{0.9*sqrt(7)*2}) {};
  \node[mystyle] (B_4) at ({85.8-3.7*3},{0.9*sqrt(7)*3}) {};
  \node[mystyle] (B_5) at ({85.8-3.7*4},{0.9*sqrt(7)*4}) {};
  \node[mystyle] (B_6) at ({85.8-3.7*5},{0.9*sqrt(7)*5}) {};
  \node[mystyle] (B_7) at ({85.8-3.7*6},{0.9*sqrt(7)*6}) {};
  \node[mystyle] (B_8) at ({85.8-3.7*7},{0.9*sqrt(7)*7}) {};
  \node[mystyle] (B_9) at ({85.8-3.7*8},{0.9*sqrt(7)*8}) {};
  \node[mystyle] (B_10) at ({85.8-3.7*9},{0.9*sqrt(7)*9}) {};
  \node[mystyle] (B_11) at ({85.8-3.7*10},{0.9*sqrt(7)*10}) {};
  \node[mystyle] (B_12) at ({85.8-3.7*11},{0.9*sqrt(7)*11}) {};

  \node[mystyle] (C_2) at ({3.3},{-1.1*sqrt(7)}) {};
  \node[mystyle] (C_3) at ({3.3*2},{-1.1*sqrt(7)*2}) {};
  \node[mystyle] (C_4) at ({3.3*3},{-1.1*sqrt(7)*3}) {};
  \node[mystyle] (C_5) at ({3.3*4},{-1.1*sqrt(7)*4}) {};
  \node[mystyle] (C_6) at ({3.3*5},{-1.1*sqrt(7)*5}) {};
  \node[mystyle] (C_7) at ({3.3*6},{-1.1*sqrt(7)*6}) {};
  \node[mystyle] (C_8) at ({3.3*7},{-1.1*sqrt(7)*7}) {};
  \node[mystyle] (C_9) at ({3.3*8},{-1.1*sqrt(7)*8}) {};
  \node[mystyle] (C_10) at ({3.3*9},{-1.1*sqrt(7)*9}) {};
  \node[mystyle] (C_11) at ({3.3*10},{-1.1*sqrt(7)*10}) {};
  \node[mystyle] (C_12) at ({3.3*11},{-1.1*sqrt(7)*11}) {};
  \node[mystyle] (C_13) at ({3.3*12},{-1.1*sqrt(7)*12}) {};
  \node[mystyle] (C_14) at ({3.3*13},{-1.1*sqrt(7)*13}) {};

  \node[mystyle] (D_2) at ({85.8-3.3},{-1.1*sqrt(7)}) {};
  \node[mystyle] (D_3) at ({85.8-3.3*2},{-1.1*sqrt(7)*2}) {};
  \node[mystyle] (D_4) at ({85.8-3.3*3},{-1.1*sqrt(7)*3}) {};
  \node[mystyle] (D_5) at ({85.8-3.3*4},{-1.1*sqrt(7)*4}) {};
  \node[mystyle] (D_6) at ({85.8-3.3*5},{-1.1*sqrt(7)*5}) {};
  \node[mystyle] (D_7) at ({85.8-3.3*6},{-1.1*sqrt(7)*6}) {};
  \node[mystyle] (D_8) at ({85.8-3.3*7},{-1.1*sqrt(7)*7}) {};
  \node[mystyle] (D_9) at ({85.8-3.3*8},{-1.1*sqrt(7)*8}) {};
  \node[mystyle] (D_10) at ({85.8-3.3*9},{-1.1*sqrt(7)*9}) {};
  \node[mystyle] (D_11) at ({85.8-3.3*10},{-1.1*sqrt(7)*10}) {};
  \node[mystyle] (D_12) at ({85.8-3.3*11},{-1.1*sqrt(7)*11}) {};
  \node[mystyle] (D_13) at ({85.8-3.3*12},{-1.1*sqrt(7)*12}) {};
  \node[mystyle] (D_14) at ({85.8-3.3*13},{-1.1*sqrt(7)*13}) {};

  \draw[red,thick] (A_1) -- (A_2);
  \draw[red,thick] (A_2) -- (A_3);
  \draw[red,thick] (A_3) -- (A_4);
  \draw[red,thick] (A_4) -- (A_5);
  \draw[red,thick] (A_5) -- (A_6);
  \draw[red,thick] (A_6) -- (A_7);
  \draw[red,thick] (A_7) -- (A_8);
  \draw[red,thick] (A_8) -- (A_9);
  \draw[red,thick] (A_9) -- (A_10);
  \draw[red,thick] (A_10) -- (A_11);
  \draw[red,thick] (A_11) -- (A_12);

  \draw[red,thick] (B_1) -- (B_2);
  \draw[red,thick] (B_2) -- (B_3);
  \draw[red,thick] (B_3) -- (B_4);
  \draw[red,thick] (B_4) -- (B_5);
  \draw[red,thick] (B_5) -- (B_6);
  \draw[red,thick] (B_6) -- (B_7);
  \draw[red,thick] (B_7) -- (B_8);
  \draw[red,thick] (B_8) -- (B_9);
  \draw[red,thick] (B_9) -- (B_10);
  \draw[red,thick] (B_10) -- (B_11);
  \draw[red,thick] (B_11) -- (B_12);

  \draw[red,thick] (A_12) -- (B_12);

  \draw[red,thick] (A_1) -- (C_2);
  \draw[red,thick] (C_2) -- (C_3);
  \draw[red,thick] (C_3) -- (C_4);
  \draw[red,thick] (C_4) -- (C_5);
  \draw[red,thick] (C_5) -- (C_6);
  \draw[red,thick] (C_6) -- (C_7);
  \draw[red,thick] (C_7) -- (C_8);
  \draw[red,thick] (C_8) -- (C_9);
  \draw[red,thick] (C_9) -- (C_10);
  \draw[red,thick] (C_10) -- (C_11);
  \draw[red,thick] (C_11) -- (C_12);
  \draw[red,thick] (C_12) -- (C_13);
  \draw[red,thick] (C_13) -- (C_14);

  \draw[red,thick] (B_1) -- (D_2);
  \draw[red,thick] (D_2) -- (D_3);
  \draw[red,thick] (D_3) -- (D_4);
  \draw[red,thick] (D_4) -- (D_5);
  \draw[red,thick] (D_5) -- (D_6);
  \draw[red,thick] (D_6) -- (D_7);
  \draw[red,thick] (D_7) -- (D_8);
  \draw[red,thick] (D_8) -- (D_9);
  \draw[red,thick] (D_9) -- (D_10);
  \draw[red,thick] (D_10) -- (D_11);
  \draw[red,thick] (D_11) -- (D_12);
  \draw[red,thick] (D_12) -- (D_13);
  \draw[red,thick] (D_13) -- (D_14);

  \end{tikzpicture}
  \caption{An illustration for $m=7$}
  \label{Illustration}
 \end{figure}

\section{Open Problems}

A natural question to ask would be, whether it's possible to prove our main result with convex equilateral polygons. 

\begin{open}
    
For any $m \equiv 3 \pmod{4}$ does there exist $N$ such that for all $n \geq N$ there exists a \textbf{convex} equilateral polygon of size n? 

\end{open}

For the classification to be complete, one might ask whether we can make a construction to match the necessary condition given by Iino and Sakiyama \cite{iino2025planarlatticesequilateraloddgons} for every $m \equiv 3 \pmod{4}$.

\begin{open} 
For any $m \equiv 3 \pmod{4}$ does there exist an equilateral polygon of size $n'$, where $n'$ is the largest prime factor of m?
\end{open}

\begin{open}
    
For any $m \equiv 3 \pmod{4}$ does there exist a \textbf{convex} equilateral polygon of size $n'$, where $n'$ is the largest prime factor of m?

\end{open}

\vspace{1cm}

{\noindent \textbf{Acknowledgments.}} This research was conducted as part of a course project under the supervision of Prof. Henry Cohn. The author thanks Prof. Cohn for his guidance throughout the project.

\bibliographystyle{amsplain.bst}
\bibliography{Ref}

\end{document}